\newtheorem{theorem}{Theorem}[section]
\newtheorem{prop}[theorem]{Proposition}
\newtheorem{lemma}[theorem]{Lemma}
\newtheorem{codinglemma}[theorem]{The Coding Lemma}
\theoremstyle{definition}
\newtheorem{question}[theorem]{Question}
\theoremstyle{remark}
\newtheorem{remark}[theorem]{Remark}
\numberwithin{equation}{section}
\DeclareMathOperator{\lh}{lh} \DeclareMathOperator{\tr}{tr}
\DeclareMathOperator{\dom}{dom}
\newcommand{\cantor}{2^\omega}
\newcommand{\inx}[1]{\ \epsilon_{#1}\ }
\newcommand{\rest}[1]{\upharpoonright{#1}}
\newcommand{\restrict}{\upharpoonright}
\def\R{{\mathbb R}}
\def\Q{{\mathbb Q}}
\begin{document}

\title{A co-analytic maximal set of orthogonal measures}

\author{Vera Fischer}
\address{Kurt G\"odel Research Center, University of Vienna, W\"ahringer Strasse 25, 1090 Vienna, Austria}
\email{vfischer@logic.univie.ac.at}

\author{Asger T\"ornquist}
\address{Kurt G\"odel Research Center, University of Vienna, W\"ahringer Strasse 25, 1090 Vienna, Austria}
\email{asger@logic.univie.ac.at}

\thanks{The authors wish to thank the Austrian Science Fund FWF for post-doctoral support through grant no. P 19375-N18 (T\"ornquist) and P 20835-N13 (Fischer).}

\subjclass[2000]{03E15}


\keywords{Descriptive set theory; constructible sets}

\begin{abstract}
We prove that if $V=L$ then there is a $\Pi^1_1$ maximal orthogonal
(i.e. mutually singular) set of measures on Cantor space. This
provides a natural counterpoint to the well-known Theorem of Preiss
and Rataj \cite{preissrataj85} that no analytic set of measures can
be maximal orthogonal.
\end{abstract}

\maketitle

\section{Introduction}

Let $X$ be a Polish space and let $P(X)$ be the associated Polish
space of Borel probability measures on $X$ (see e.g.
\cite[17.E]{kechris95}). Recall that $\mu,\nu\in P(X)$ are said to
be {\it orthogonal} (or {\it mutually singular}) if there is a Borel
set $B\subseteq X$ such that $\mu(B)=1$ and $\nu(B)=0$. We will
write $\mu\perp\nu$.

Preiss and Rataj proved in \cite{preissrataj85} that if $X$ is an
uncountable Polish space then no analytic set of measures can be
maximal orthogonal, answering a question raised by Mauldin. Later
Kechris and Sofronidis \cite{kecsof01} gave a new proof of this
result using Hjorth's theory of turbulence.

The purpose of this paper is to prove that Preiss and Rataj's result
is in some sense optimal. Specifically, we will prove:

\begin{theorem}
If $V=L$ then there is a $\Pi^1_1$ maximal set of orthogonal
measures in $P(\cantor)$. \label{mainthmv1}
\end{theorem}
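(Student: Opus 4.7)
I would use the standard template for producing $\Pi^1_1$ combinatorial objects under $V=L$: a transfinite recursion of length $\omega_1$ along the canonical $\Sigma^1_2$-good wellorder $<_L$ of reals, combined with a coding device ensuring that every measure placed into the family encodes a witness to its own membership.

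\textbf{Setup and recursion.} Identify $P(\cantor)$ with a $\Delta^1_1$ subset of $2^\omega$ via the coding of a measure by its values on basic clopen sets; orthogonality is then a Borel relation. Enumerate the measures in $<_L$-order as $(\nu_\alpha)_{\alpha<\omega_1}$ and construct $\mathcal{M}=\{\mu_\alpha:\alpha<\omega_1\}$ inductively. At stage $\alpha$, set $\mathcal{M}_\alpha=\{\mu_\beta:\beta<\alpha\}$ and distinguish two cases. If $\nu_\alpha$ is not orthogonal to some member of $\mathcal{M}_\alpha$, then $\nu_\alpha$ is already ``handled'' and we are free to choose $\mu_\alpha$ to be any measure orthogonal to $\mathcal{M}_\alpha$ that bears the coding required below. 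Otherwise, a coding lemma provides $\mu_\alpha$ orthogonal to every $\mu_\beta$ with $\beta<\alpha$, not orthogonal to $\nu_\alpha$, and also bearing the coding. The second alternative enforces maximality: any putative $\nu\perp\mathcal{M}$ would equal some $\nu_\gamma$, at whose stage we would have arranged $\mu_\gamma\not\perp\nu_\gamma$, a contradiction.

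\textbf{Coding and $\Pi^1_1$ complexity.} The coding is chosen so that each $\mu_\alpha$ carries an absolute identifier of its stage of introduction, recognizable from within any countable transitive $M\models\mathsf{ZFC}^-+V=L$ with $\mu_\alpha\in M$. Because $<_L$ and the orthogonality relation are absolute to such $M$, the recursion carried out inside $M$ agrees with the true recursion up to the height of $M$, so
\[\mu\in\mathcal{M}\iff M\models\mu\in\mathcal{M}\quad\text{for every such }M\text{ containing }\mu.\]
Concretely, this characterization can be rephrased as ``$L_{\omega_1^\mu}\models\phi(\mu)$'' for a fixed formula $\phi$, yielding $\Pi^1_1(\mu)$ complexity via the standard fact that satisfaction in the least $\mu$-admissible set is $\Pi^1_1(\mu)$ for $\Pi_1$ formulas.

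The main obstacle I expect is proving the coding lemma: one must show that the space of Borel probability measures on $\cantor$ is simultaneously flexible enough and rigid enough to admit, at each stage, a choice of $\mu_\alpha$ meeting all the orthogonality constraints while also encoding an absolutely recognizable identifier. I would expect the construction to proceed by distributing mass on carefully chosen disjoint Cantor subspaces whose shapes encode the history of the recursion, in the spirit of classical constructions of families of pairwise singular measures. Once the coding lemma is established, maximality, pairwise orthogonality and $\Pi^1_1$-definability of $\mathcal{M}$ all follow by the standard $V=L$ machinery sketched above.
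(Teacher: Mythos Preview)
Your plan is essentially the paper's (Miller's template: recurse along $<_L$, code a witness into each chosen measure, then bound the existential witness by $\Delta^1_1(f)$ to get $\Pi^1_1$), and it is correct as an outline. Two points of comparison are worth recording.

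\emph{Organization of the recursion.} The paper does not diagonalize against an enumeration $(\nu_\alpha)$ with a case split. It simply lets $\mu_\alpha$ be the $<_L$-least (code for a) non-atomic measure orthogonal to all earlier $\mu_\beta$; maximality follows because any $\nu\perp\{\mu_\beta:\beta<\omega_1\}$ would itself eventually be $<_L$-least and hence chosen. Each $\mu_\alpha$ is then \emph{replaced} by an absolutely equivalent $\nu_\alpha\approx\mu_\alpha$ that carries the required code. This avoids your two-case mechanism entirely and keeps the definability bookkeeping cleaner.

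\emph{The coding lemma.} You correctly flag this as the crux, but your guess (``distributing mass on carefully chosen disjoint Cantor subspaces'') points away from the actual idea. The paper's Coding Lemma shows that for any non-atomic $\mu_f$ and any $z\in 2^\omega$ there is $g$ with $\mu_g\approx\mu_f$ (same measure class) and $z$ recursively recoverable from $g$. The trick is to leave the null sets of $\mu_f$ untouched and only redistribute mass along a single distinguished branch $t^\mu_\infty$: at the $n$th splitting node along that branch, reset the conditional probabilities of the two children to $(\tfrac{2}{3},\tfrac{1}{3})$ or $(\tfrac{1}{3},\tfrac{2}{3})$ according to $z(n)$, and elsewhere rescale so that the Radon--Nikodym derivative is constant off each node of the branch. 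Because $\mu_g\approx\mu_f$, orthogonality to all previously chosen measures is preserved automatically, and there is no need to engineer ``not orthogonal to $\nu_\alpha$'' separately. This measure-class preservation is exactly the feature that makes the coding compatible with the orthogonality constraints, and it is the one concrete idea your proposal is missing.
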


The assumption that $V=L$ can of course be replaced by the
assumption that all reals are constructible. Also, the proof easily
relativizes to a parameter $x\in\cantor$: If $V=L[x]$ then there is
a $\Pi^1_1(x)$ maximal orthogonal set of measures in $P(\cantor)$.

Theorem \ref{mainthmv1} belongs to a line of results starting with
A.W. Miller's paper \cite{miller89}. Miller proved, among several
other results, that assuming $V=L$ there is a $\Pi^1_1$ maximal
almost disjoint family in $\mathcal P(\omega)$, there is a $\Pi^1_1$
Hamel basis for $\R$ over $\Q$, and there is a $\Pi^1_1$ set meeting
every line in $\R^2$ exactly twice.

More recently, Miller's technique has found use in the study of
maximal cofinitary subgroups of the infinite symmetric group
$S_\infty$: Gao and Zhang showed in \cite{gaozha05} that if $V=L$
then there is a maximal cofinitary group generated by a $\Pi^1_1$
subset of $S_\infty$, and Kastermans in \cite{kastermans09} improved
this by showing that if $V=L$ then there is a $\Pi^1_1$ maximal
cofinitary subgroup of $S_\infty$.

\medskip

The present paper is organized into four sections. In \S 2 we
introduce the basic effective descriptive set-theoretic notions
related to the space $P(\cantor)$, in particular, we introduce a
natural notion of a code for a measure on $\cantor$. We also revisit
a product measures construction due to Kechris and Sofronidis.
Theorem \ref{mainthmv1} is proved in \S 3. The proof hinges on a
method for coding a given real into a non-atomic measure while
keeping the {\it measure class} of the original measure intact in
the process; this is the content of the ``Coding Lemma''
\ref{thecodinglemma}. Finally in \S 4 we show that a maximal
orthogonal family of continuous measures always has size continuum,
and that if there is a Cohen real over $L$ in $V$ then there is no
$\Pi^1_1$ maximal set of orthogonal measures.

\medskip

{\it Remark.} In the present paper we have attempted to give a
completely elementary account of Miller's technique as it applies to
Theorem \ref{mainthmv1} above, and to provide the details of the
argument while relying only on standard methods that can be found in
places such as \cite[\S 13]{kanamori97} or \cite[Ch. 5]{drake74}. A
somewhat different exposition of the details of Miller's technique
can be found in Kastermans' thesis \cite{kastermans06}.

\section{Preliminaries}

For $s\in 2^{<\omega}$, let
$$
N_s=\{x\in\cantor: s\subseteq x\},
$$
the basic neighbourhood defined by $s$. Define
$$
p(2^\omega)=\{f:2^{<\omega}\rightarrow[0,1]: f(\emptyset)=1\wedge (\forall
s\in 2^{<\omega})f(s)=f(s^\smallfrown 0)+ f(s^\smallfrown 1)\}.
$$
Then $p(2^\omega)\subseteq [0,1]^{2^{<\omega}}$ is closed, and an
easy application of Kolmogorov's consistency Theorem shows that for
each $f\in p(2^\omega)$ there is a unique $\mu_f\in P(2^\omega)$
such that $\mu_f(N_s)=f(s)$ for all $s\in 2^{<\omega}$, see
\cite[17.17]{kechris95}. Conversely, if $\mu\in P(2^\omega)$ then
$f(s)=\mu(N_s)$ defines $f\in p(2^\omega)$ such that $\mu_f=\mu$,
thus $f\mapsto\mu_f$ is a bijection. We will call the element $f\in
p(\cantor)$  the {\it code} for $\mu_f$.

Note that if $s_n$ enumerates $2^{<\omega}$ and we let
$f_n:2^\omega\rightarrow \mathbb{R}$ be defined as follows:
$$
f_n(x) = \left\{ \begin{array}{ll}
         1 & \mbox{if $s_n\subseteq x$}\\
         0 & \mbox{otherwise},\end{array} \right.
$$
then the metric on $P(2^\omega)$ defined by
$$
\delta(\mu,\nu)=\sum_{n=0}^{\infty} 2^{-n-1}\dfrac{|\int f_n d\mu-\int f_n d\nu|}{\|f_n\|_\infty}
$$
given in \cite[17.19]{kechris95} makes the map $f\mapsto \mu_f$ an
isometric bijection if we equip $p(2^\omega)$ with the metric
$$
d(f,g)=\sum_{n=0}^\infty 2^{-n-1}|f(s_n)-g(s_n)|.
$$
Let $(q_i:i\in\omega)$ be a recursive enumeration of
$$
\{q:2^n\to \mathbb Q\cap[0,1]: n\in\omega\wedge \sum_{s\in\dom(q)}
q(s)=1\}.
$$
For each $q_i$, let $\hat q_i\in p(\cantor)$ be the unique element
of $p(\cantor)$ such that
$$
(\forall s\in \dom(q_i))(\forall k)\hat q_i(s\smallfrown 0^k)=q_i(s),
$$
where $0^k$ denotes a sequence of zeros of length $k$. Clearly the
sequence $(\hat q_i: i\in\omega)$ is dense in $p(\cantor)$, and it
is routine to see that the relations $P,Q\subseteq\omega^4$ defined
by
$$
P(i,j,m,k)\iff d(\hat q_i,\hat q_j)\leq\frac m {k+1}
$$
$$
Q(i,j,m,k)\iff d(\hat q_i,\hat q_j)<\frac m {k+1}
$$
are recursive. Thus $(\hat q_i:i\in\omega)$ provides a recursive
presentation (in the sense of \cite[3B]{moschovakis80}) of
$p(\cantor)$, and so $(\mu_{\hat q_i}: i\in\omega)$ provides a
recursive presentation of $P(\cantor)$. The map $f\mapsto\mu_f$ is
then a recursive isomorphism between $p(\cantor)$ and $P(\cantor)$.
So from a descriptive set-theoretic point of view there is really no
difference between working with $P(\cantor)$ or $p(\cantor)$. In
particular, it doesn't matter in hierarchy complexity calculations
if we deal with the codes for measures, or with the measures
themselves.

\begin{remark}
Although we could easily have given $P(\cantor)$ a recursive
presentation directly without the detour via $p(\cantor)$, the space
$p(\cantor)$ will still be useful to us. Namely, elements of
$P(\cantor)$ are formally functions $\mu:\mathcal B(\cantor)\to
[0,1]$ defined on the Borel sets $\mathcal B(\cantor)$, and so
formally $\mu\notin L_\delta$ for any $\delta<\omega_1$. However,
since codes are simply functions from $2^{<\omega}$ to $[0,1]$, the
code for $\mu$ may be in $L_\delta$ for some $\delta<\omega_1$, even
if $\mu\notin L_\delta$.
\end{remark}

Recall from real analysis that if $\mu,\nu\in P(\cantor)$, then
$\mu$ is {\it absolutely continuous} with respect to $\nu$, written
$\mu \ll \nu$ if for all Borel subset $B$ of $2^\omega$ it holds
that $\nu(B)=0$ implies $\mu(B)=0$. We say that $\mu,\nu\in
P(\cantor)$ are {\it absolutely equivalent}, written
$\mu\approx\nu$, if $\mu\ll\nu$ and $\nu\ll\mu$.

\begin{lemma}
(a) The relations $\ll$, $\approx$ and $\bot$ are arithmetical.

(b) The set
$$
P_c(2^\omega)=\{\mu\in P(2^\omega):\mu\;\hbox{is non-atomic}\}
$$
is arithmetical.
\end{lemma}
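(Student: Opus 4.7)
The plan is to express each of $\ll$, $\approx$, $\bot$ and the non-atomicity condition as an arithmetical formula in the code $f \in p(\cantor)$ for a measure $\mu_f$. The definitions as given quantify implicitly over Borel sets (in (a)) or over points of $\cantor$ (in (b)), neither of which is a priori arithmetical; the main work is to replace these by quantification over finite antichains $F \subseteq 2^{<\omega}$, which recursively code the clopen subsets of $\cantor$. The payoff is that for such an $F$ one has $\mu_f\bigl(\bigcup_{s \in F} N_s\bigr) = \sum_{s \in F} f(s)$, so the measure of a clopen is directly computable from the code.

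For (a), I would first establish the characterization
\[
\mu_f \ll \mu_g \iff \forall k \in \omega\, \exists m \in \omega\, \forall\text{ clopen }U\, \bigl(\mu_g(U) < 2^{-m} \Rightarrow \mu_f(U) < 2^{-k}\bigr).
\]
The direction $(\Leftarrow)$ uses outer regularity: any Borel $B$ with $\mu_g(B)=0$ is contained in an open set of arbitrarily small $\mu_g$-measure, and every open subset of $\cantor$ is an increasing union of clopens, so the implication lifts from clopens to Borel sets by continuity of $\mu_f$ and $\mu_g$ from below. The direction $(\Rightarrow)$ is a standard Borel--Cantelli argument: if the right-hand side failed at $\varepsilon = 2^{-k}$, pick clopens $U_n$ with $\mu_g(U_n) < 2^{-n}$ and $\mu_f(U_n) \geq 2^{-k}$, and note that $\mu_g(\limsup U_n) = 0$ while $\mu_f(\limsup U_n) \geq 2^{-k}$, contradicting $\mu_f \ll \mu_g$. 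Since ``$U$ clopen'' is quantification over codes for finite antichains, the resulting formula is $\Pi^0_3$ in $f,g$. Absolute equivalence $\approx$ is then a conjunction of two such formulas, hence arithmetical. For orthogonality I would use the analogous equivalence
\[
\mu_f \bot \mu_g \iff \forall k \in \omega\, \exists\text{ clopen }U\, \bigl(\mu_f(U) > 1 - 2^{-k} \wedge \mu_g(U) < 2^{-k}\bigr),
\]
where $(\Leftarrow)$ is Borel--Cantelli applied to $B = \limsup U_k$ (on which $\mu_g$ vanishes while $\mu_f$ is full), and $(\Rightarrow)$ uses both regularity and compactness: for a witnessing Borel $B$ choose a closed $F \subseteq B$ with $\mu_f(F) > 1 - 2^{-k}$ and an open $V \supseteq B$ with $\mu_g(V) < 2^{-k}$; writing $V$ as an increasing union of clopens $V_n$, compactness of $F$ gives some $V_n \supseteq F$, which serves as the required $U$. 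This yields a $\Pi^0_2$ condition on $f,g$.

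For (b), the compactness of $\cantor$ converts the pointwise condition into a uniform one:
\[
\mu_f\text{ is non-atomic} \iff \forall k \in \omega\, \exists n \in \omega\, \forall s \in 2^n\, \bigl(f(s) < 2^{-k}\bigr).
\]
Here $(\Leftarrow)$ is immediate since $\mu_f(\{x\}) \leq f(x\restriction n)$ for all $x$ and all $n$. For $(\Rightarrow)$, fix $k$; for each $x \in \cantor$ pick $n_x$ with $f(x\restriction n_x) < 2^{-k}$ (possible since $\mu_f(\{x\}) = 0$ and $f(x\restriction n)$ is decreasing in $n$); the basic neighbourhoods $N_{x\restriction n_x}$ form an open cover of $\cantor$, so by compactness finitely many suffice, and any $n$ exceeding their levels works. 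This is plainly $\Pi^0_2$ in $f$. No single step looks delicate; the only thing to be careful about is confirming that outer regularity plus the compactness of $\cantor$ truly suffice to eliminate the Borel/point quantifiers in favour of quantification over finite antichains in $2^{<\omega}$.
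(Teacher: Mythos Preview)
Your proposal is correct and follows essentially the same route as the paper: the arithmetical characterizations you give for $\ll$, $\bot$, and non-atomicity are identical to those in the paper (up to replacing $\varepsilon$ by $2^{-k}$ and finite unions $\bigcup N_{s_i}$ by antichain-coded clopens). The only differences are in how the characterizations are justified: where the paper cites \cite[17.10]{kechris95} for the reduction of the $\ll$ condition to clopens and invokes K\"onig's Lemma for the non-atomicity claim, you supply direct arguments via outer regularity, Borel--Cantelli, and compactness of $\cantor$; these are entirely valid and amount to unpacking the cited facts.
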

\begin{proof}
(a) To see that $\ll$ and $\approx$ are arithmetical, note that by
\cite[p. 105]{kechris95}
\begin{align*}
\mu\ll\nu\iff(\forall\epsilon>0)(\exists \delta>0)(\forall
B\subseteq 2^\omega\;\hbox{Borel}) (\nu(B)<\delta \rightarrow
\mu(B)<\epsilon)
\end{align*}
and so using \cite[17.10]{kechris95} this is equivalent to
\begin{align*}
\mu\ll\nu\iff (\forall\epsilon>0)(\exists\delta>0)(\forall
s_1,\dots, s_n\in 2^{<\omega})&(\nu(\bigcup_{i=1}^n
N_{s_i})<\delta\longrightarrow\\
&\mu(\bigcup_{i=1}^n N_{s_i})<\epsilon).
\end{align*}
To see that $\bot$ is arithmetical, note that
\begin{align*}
\mu\bot\nu\iff &(\forall\epsilon>0)(\exists s_1,\dots, s_n\in
2^{<\omega})(\mu(\bigcup_{i=1}^n N_{s_i})<\epsilon\wedge
\nu(\bigcup_{i=1}^n N_{s_i})>1-\epsilon)
\end{align*}

(b) We claim that
$$
\mu\in P_c(2^\omega)\iff (\forall\epsilon
>0)(\exists n)(\forall s\in 2^{<\omega})(\hbox{lh}(s)=n\rightarrow
\mu(N_s)<\epsilon).
$$
The implication from right to left is clear.
To see the reverse implication, note that the tree
$$
\{s\in
2^{<\omega}:\mu(N_s)>\epsilon\}
$$
is finite branching, so by K\"{o}nig's Lemma it either has finite
height or it has an infinite branch. The latter is the case if and
only if $\mu$ has an atom.
\end{proof}

\begin{remark}
We let
$$
p_c(\cantor)=\{f\in p(\cantor):\mu_f\text{ is non-atomic}\},
$$
which is arithmetical by the above.
\end{remark}

We now recall a construction due to Kechris and Sofronidis \cite[p.
1463f]{kecsof01}, which is based on a result of Kakutani
\cite{kakutani48} regarding the equivalence of product measures. For
$x\in 2^\omega$, define $\alpha^x\in [0,1]^\omega$ by
$$
\alpha^x(n) = \left\{ \begin{array}{ll}
         \frac{1}{4}(1+ \frac{1}{\sqrt{n+1}}) & \mbox{if $x(n)=1$}\\
         \frac{1}{4} & \mbox{if $x(n)=0$}.\end{array} \right.
$$
Then we let $\mu^x\in P(2^\omega)$ be the product measure on
$\cantor$ defined by
$$
\mu^x=\prod_{n=0}^\infty
[\alpha^x(n)\delta_0+(1-\alpha^x(n))\delta_1]
$$
where $\delta_0,\delta_1$ are the point measures on $2=\{0,1\}$. The
function $x\mapsto\mu^x$ is continuous. The corresponding map
$2^\omega \to p(2^\omega):x\mapsto f^x$ such that $\mu^x=\mu_{f^x}$
for all $x$, is given by
$$
f^x(s)=\prod_{k=0}^{\hbox{lh}(s)}[(1-s(k))\alpha^x(k)+s(k)(1-\alpha^x(k))],
$$
and is clearly recursive. For $x,x^\prime\in 2^\omega$, let
$$
xE_I x^\prime\iff
\sum_{n=0}^\infty{\frac{|x(n)-x^\prime(n)|}{n+1}<\infty}.
$$
From Kakutani's theorem we obtain that if $xE_Ix^\prime$ then
$\mu^x\approx \mu^{x^\prime}$ and if $\neg{(x{E_I} x^\prime)}$ then
$\mu^x\bot\mu^{x^\prime}$. (See \cite[p. 1463]{kecsof01}.)  For the
next lemma it is worth recalling that $\mu,\nu\in P(\cantor)$ are
orthogonal if and only if
$$
\neg(\exists\eta\in P(2^\omega)) \eta\ll\mu\wedge\eta\ll\nu.
$$
Moreover, $\ll$ has the {\it ccc below} property: For any $\mu\in
P(\cantor)$, any family of orthogonal measures $\ll$ below $\mu$ is
countable, see e.g. the proof of \cite[Theorem 3.1]{kecsof01}.

\begin{lemma}
(a) If $\mu\in P(2^\omega)$ then
$$
\{x\in\cantor:\mu^x\perp\mu\}
$$
is comeagre.

(b) If $(\mu_n)$ is a finite or countable sequence of measures on
$\cantor$ then there is $\nu\in P(2^\omega)$ such that
$$
(\forall n) \nu\perp\mu_n
$$
and $\nu$ is arithmetical in $(\mu_n)$. \label{ortholemma}
\end{lemma}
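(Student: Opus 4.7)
For (a), my plan is to show the complement $A = \{x \in \cantor : \mu^x \not\perp \mu\}$ is meager. Since $x E_I y$ implies $\mu^x \approx \mu^y$, the set $A$ is $E_I$-invariant, so it will suffice to show that (i) $A$ meets only countably many $E_I$-classes and (ii) every $E_I$-class is meager. For (i), given $x \in A$ I would take the Lebesgue decomposition $\mu^x = \nu^x_a + \nu^x_s$ of $\mu^x$ with respect to $\mu$, so that $\nu^x_a \ll \mu$ and $\nu^x_s \perp \mu$; the assumption $\mu^x \not\perp \mu$ makes $\nu^x_a$ nonzero. If $x,y \in A$ and $\neg(x E_I y)$, Kakutani's theorem gives $\mu^x \perp \mu^y$, and the dominations $\nu^x_a \leq \mu^x$ and $\nu^y_a \leq \mu^y$ transfer this to $\nu^x_a \perp \nu^y_a$. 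Thus a transversal of $A/E_I$ produces (after normalisation) a pairwise orthogonal family of probability measures all $\ll \mu$, which the ccc-below property forces to be countable. For (ii), I would write $[x]_{E_I} = \bigcup_k F_k$ with $F_k = \{y : \sum_n |x(n)-y(n)|/(n+1) \leq k\}$ closed, and note that every basic clopen $N_s$ contains the point $y$ obtained from $x$ by flipping every bit past position $|s|$; for this $y$, $\sum_{n \geq |s|} 1/(n+1) = \infty > k$, so $y \notin F_k$. Hence each $F_k$ is nowhere dense, which combined with (i) makes $A$ meager.

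For (b), by part (a) each $C_n = \{x : \mu^x \perp \mu_n\}$ is comeager, and the characterisation of $\perp$ from Lemma 2.1 shows $C_n$ is uniformly $\Pi^0_2$ in $\mu_n$. So one may write $C_n = \bigcap_m U_{n,m}$ for a sequence of open dense sets $U_{n,m}$ that are arithmetically definable uniformly in $\mu_n$. My plan is then to run an effective Baire category argument: fix a recursive enumeration $(V_k)$ of these $U_{n,m}$ and, recursively in a code for $(\mu_n)$, build an increasing sequence $s_0 \subsetneq s_1 \subsetneq \cdots$ in $2^{<\omega}$ by taking $s_{k+1}$ to be the lexicographically least extension of $s_k$ satisfying $N_{s_{k+1}} \subseteq V_k$ (such an extension exists by density and can be located arithmetically from the parameter). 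The point $x = \bigcup_k s_k$ then lies in $\bigcap_n C_n$ and is arithmetic in $(\mu_n)$, so $\nu = \mu^x$ is the desired measure, its code being recursive in $x$ via the map $x \mapsto f^x$ introduced in \S 2.

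The main obstacle is step (i) of part (a): one has to convert the bare non-orthogonality $\mu^x \not\perp \mu$ into a genuinely pairwise orthogonal family of probability measures absolutely continuous with respect to $\mu$ in order for the ccc-below property to bite, and the Lebesgue decomposition is exactly the bridge that accomplishes this. After that, the meagerness of individual $E_I$-classes and the effective Baire category synthesis in (b) are essentially routine.
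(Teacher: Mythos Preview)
Your argument is correct and rests on the same ingredients as the paper's---$E_I$-invariance of the bad set, Kakutani's dichotomy, meagreness of $E_I$-classes, and the ccc-below property of $\ll$---but for (a) you organize them differently. The paper argues by contradiction via the zero--one law for $E_I$-invariant sets with the Baire property: if $A=\{x:\mu^x\not\perp\mu\}$ were non-meagre it would be comeagre, hence meet uncountably many $E_I$-classes, contradicting ccc-below. You instead argue directly that $A$ meets only countably many $E_I$-classes and that each class is meagre. Your Lebesgue-decomposition step is exactly the bridge the paper leaves implicit when it says the uncountable family ``contradict[s] that $\perp$ is ccc below $\mu$.'' Your route is slightly more elementary in that it avoids the zero--one law, at the cost of verifying class-meagreness by hand; the paper's is terser.

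For (b) the two proofs coincide: the paper observes that $\{x:(\forall n)\,\mu^x\perp\mu_n\}$ is arithmetical in $(\mu_n)$ and comeagre, then cites an effective Baire category result of Kechris for an arithmetical point, which is precisely the construction you spell out explicitly.
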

\begin{proof}
(a) Since
$$
\{x\in 2^\omega:\neg\mu^x\bot \mu\}
$$
is clearly $E_I$ invariant, if it is non-meagre it must be comeagre.
But then there must be an uncountable sequence $x_\alpha\in
2^\omega$, $\alpha<\omega_1$, such that if $\alpha\neq \beta$ then
$\neg x_\alpha E_I x_\beta$ and $\mu^{x_\alpha}\not\perp \mu$,
contradicting that $\bot$ is ccc below $\mu$.

(b) The set $\{x\in 2^\omega:(\forall n)(\mu^x\bot \mu_n)\}$ is
arithmetical in $(\mu_n)$, and by (a) it is comeagre. Thus the
second claim follows from \cite[4.1.4]{kechris73}.
\end{proof}

\section{Proof of the main theorem}

In this section we prove Theorem \ref{mainthmv1}. It is clearly
enough to establish the following:

\begin{theorem}
If $V=L$ then there is a $\Pi^1_1$ maximal set of orthogonal
measures in $P_c(2^\omega)$.\label{mainthmv2}
\end{theorem}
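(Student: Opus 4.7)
The plan is to build by transfinite recursion of length $\omega_1$ (using $V=L$) a sequence $(\nu_\alpha)_{\alpha<\omega_1}$ of pairwise orthogonal measures in $P_c(2^\omega)$ whose range $A$ is both maximal orthogonal and $\Pi^1_1$. First I enumerate the codes in $p_c(2^\omega)$ as $(f_\alpha)_{\alpha<\omega_1}$ via the canonical wellorder $<_L$, with $f_\alpha$ first appearing in some $L_{\gamma_\alpha}$, $\gamma_\alpha<\omega_1$, and set $\mu_\alpha=\mu_{f_\alpha}$.

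At stage $\alpha$ I first select a base measure $\eta_\alpha\in P_c(2^\omega)$ orthogonal to all $\nu_\beta$ with $\beta<\alpha$: if $\mu_\alpha$ itself has this property, take $\eta_\alpha=\mu_\alpha$ (this is the case in which $\mu_\alpha$ still needs to be sealed); otherwise $\mu_\alpha$ is already non-orthogonal to some $\nu_\beta$, and I invoke Lemma \ref{ortholemma}(b) to obtain $\eta_\alpha$ arithmetical in $(\nu_\beta)_{\beta<\alpha}$. Next, using the Coding Lemma \ref{thecodinglemma}, I produce $\nu_\alpha\approx\eta_\alpha$ in $P_c(2^\omega)$ whose code decodably carries a real $z_\alpha$ chosen canonically in $L$ (a real recording $\gamma_\alpha$ together with the initial segment $(\nu_\beta)_{\beta<\alpha}$ of the construction so far). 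Since $\approx$ implies non-orthogonality and preserves orthogonality with respect to third parties, $\nu_\alpha\perp\nu_\beta$ for all $\beta<\alpha$, and in the first case also $\nu_\alpha\not\perp\mu_\alpha$. Hence every $\mu\in P_c(2^\omega)$ is either some $\nu_\alpha$ or is non-orthogonal to some $\nu_\alpha$, so $A=\{\nu_\alpha:\alpha<\omega_1\}$ is maximal orthogonal in $P_c(2^\omega)$.

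The $\Pi^1_1$ definability of $A$ then follows Miller's template \cite{miller89}. Schematically,
$$
\nu\in A \iff \nu\in P_c(2^\omega)\ \wedge\ (\forall M)\bigl(M\models\mathrm{ZFC}^-+V=L\ \wedge\ \nu\in M\ \longrightarrow\ \nu\in A^M\bigr),
$$
where $M$ ranges over countable transitive models (coded by reals) and $A^M$ denotes the sequence produced by running the above recursion inside $M$. The matrix is arithmetical in $(M,\nu)$, so the outer universal quantifier over reals yields $\Pi^1_1$. By condensation in $L$, every $\nu\in A$ lies in some countable $M\prec L_{\omega_2}$ whose transitive collapse computes the construction correctly up through the stage producing $\nu$; conversely, the payload $z_\alpha$ encoded into $\nu_\alpha$ forces any transitive $M$ containing $\nu_\alpha$ to recognise it as the $\alpha$-th term of its own version of the recursion, so the right-hand side really restricts to $A$.

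The main obstacle is the Coding Lemma \ref{thecodinglemma} itself: it must encode an arbitrary real $z$ into a non-atomic measure $\nu_\alpha$ while preserving the full measure class of a prescribed base $\eta_\alpha$. Preservation of the measure class is exactly what keeps $\nu_\alpha$ orthogonal to the previous $\nu_\beta$ and what makes $\nu_\alpha$ seal $\mu_\alpha$ in the sealing case, whereas decodability of $z_\alpha$ from $\nu_\alpha$ is what drives the $\Pi^1_1$ definition. A secondary subtlety is arranging both the encoding and decoding to be sufficiently $\Sigma_1$-absolute between $V$ and countable transitive $L$-like $M$, so that the recursion carried out inside any such $M$ agrees with the true one through the relevant stages; this is handled by ensuring that the whole construction is $\Delta_1$ over $L$ in the parameter $<_L$.
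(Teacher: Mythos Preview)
Your overall architecture matches the paper's: a transfinite recursion in $L$ producing $\nu_\alpha$'s via the Coding Lemma, with each $\nu_\alpha$ carrying a real that records the construction so far together with a code for a suitable $L_\delta$. The differences in the recursion (your case split on whether $\mu_\alpha$ is already orthogonal to the earlier $\nu_\beta$'s, versus the paper's always taking the $<_L$-least orthogonal measure as the base) are cosmetic.

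The genuine gap is in your $\Pi^1_1$ definability argument. Your ``schematic'' formula
\[
\nu\in A \iff \nu\in P_c(2^\omega)\ \wedge\ (\forall M)\bigl(M\models\mathrm{ZFC}^-+V=L\ \wedge\ \nu\in M\ \longrightarrow\ \nu\in A^M\bigr)
\]
with $M$ ranging over countable \emph{transitive} models is, at face value, $\Pi^1_2$: the hypothesis ``$M$ is (coded by a real as) wellfounded and transitive'' is itself $\Pi^1_1$, so the implication inside the universal real quantifier is $\Sigma^1_1$, and $(\forall x)\Sigma^1_1$ is $\Pi^1_2$. You do not indicate how to bring this down to $\Pi^1_1$, and the remark that ``the matrix is arithmetical in $(M,\nu)$'' is simply false once wellfoundedness is part of the hypothesis. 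Waving at absoluteness and ``$\Delta_1$ over $L$'' does not address this, since the issue is the external complexity of quantifying over wellfounded models.

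The paper resolves this in a specific way that you should make explicit. Rather than quantifying universally over all models, one writes membership in $\hat A$ as $(\exists x)\hat P(f,x)$ for a $\Pi^1_1$ relation $\hat P$, and then shows that whenever such an $x$ exists, one already exists in $\Delta^1_1(f)$: decode from $f$ the real $w$ coding $L_{\delta'}$, and use Lemma~\ref{kastlem}(b) to pass to $w'\in\Delta^1_1(w)\subseteq\Delta^1_1(f)$ coding $L_{\delta'+\omega}$, which then serves as the witness. The equivalence $(\exists x)\hat P(f,x)\iff(\exists x\in\Delta^1_1(f))\hat P(f,x)$ is what makes $\hat A$ genuinely $\Pi^1_1$, since quantification over $\Delta^1_1(f)$ can be replaced by a number quantifier followed by a universal real quantifier. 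This step---recovering a hyperarithmetic-in-$f$ witness from the payload coded into $f$---is the heart of Miller's method, and your proposal does not supply it.
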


Then Theorem \ref{mainthmv1} follows by taking a union of a
$\Pi^1_1$ maximal set of orthogonal measures in $P_c(\cantor)$ with
the set of all point measures (Dirac measures), which clearly is a
$\Pi^0_1$ set.

Our notation follows that of \cite[p. 167ff.]{kanamori97}, with very
few differences. For convenience we recall the definitions and facts
that are most important for the present paper.

The canonical wellordering of $L$ will be denoted $<_L$. The
language of set theory (LOST) is denoted $\mathcal L_\epsilon$. If
$x\in\cantor$ then we define a binary relation on $\omega$ by
$$
m\inx{x}n\iff x(\langle m,n\rangle)=1,
$$
where $\langle \cdot,\cdot\rangle$ refers to some standard G\"odel
pairing function of coding a pair of integers by a single integer.
We let
$$
M_x=(\omega,\epsilon_x),
$$
the $\mathcal L_\epsilon$ structure coded by $x$. If $M_x$ is
wellfounded and extensional then we denote by $\tr(M_x)$ the
transitive collapse of $M_x$, and by $\pi_x:M_x\to\tr(M_x)$ the
corresponding isomorphism.

The following proposition encapsulates the basic descriptive
set-theoretic correspondences between $x$, $M_x$ and the
satisfaction relation. We refer to \cite[13.8]{kanamori97} and the
remarks immediately thereafter for a proof.

\begin{prop}
(a) If $\varphi(v_0,\ldots,v_{k-1})$ is a LOST formula with all free
variables shown then
$$
\{(x,n_0\ldots,
n_{k-1})\in\cantor\times\omega\times\cdots\times\omega: M_x\models
\varphi[n_0,\ldots,n_{k-1}]\}.
$$
is arithmetical.

(b) For $x\in\cantor$ such that $M_x$ is wellfounded and
extensional, the relation
$$
\{(m,f)\in\omega\times p(2^\omega): \pi_x(m)=f\}
$$
is arithmetical in $x$. The same holds if we replace $p(2^\omega)$
with $\omega^\omega$, $\cantor$, or other reasonable Polish product
spaces.

(c) There is a LOST sentence $\sigma_0$ such that if
$M_x\models\sigma_0$ and $M_x$ is wellfounded and extensional, then
$M_x\simeq L_\delta$ for some limit ordinal $\delta<\omega_1$.

(d) There is a LOST formula $\varphi_0(v_0,v_1)$ which defines the
canonical wellordering of $L_\delta$ for all
$\delta>\omega$.\label{basicprop}
\end{prop}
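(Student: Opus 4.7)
The plan is to prove each of the four parts via standard arguments in effective descriptive set theory and the theory of the constructible hierarchy. Part (a) proceeds by induction on formula complexity: the atomic formulas $v_i\in v_j$ and $v_i=v_j$ evaluate to the recursive conditions $x(\langle n_i,n_j\rangle)=1$ and $n_i=n_j$ respectively; Boolean connectives preserve arithmeticity; and each set-theoretic quantifier becomes a quantifier over $\omega$, increasing the arithmetical level by one. Part (b) then reduces to (a) via canonical representatives: by recursion on $n$, one computes in $M_x$ the elements $0_x$ (the unique $k$ with $M_x\models k=\emptyset$) and $(n+1)_x$ (the unique $k$ with $M_x\models k=n_x\cup\{n_x\}$), and analogously for rationals and for elements of $2^{<\omega}$. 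Using (a), the map $n\mapsto n_x$ is arithmetical in $x$, uniformly in $n$. Hence
$$
\pi_x(m)=f \iff M_x\models \text{``$m$ is a function from $2^{<\omega}$ to $[0,1]$''} \wedge (\forall s\in 2^{<\omega})(\forall q\in\Q)\bigl(f(s)>q \leftrightarrow M_x\models m(s_x)>q_x\bigr),
$$
which is arithmetical in $x$. The same argument handles $\omega^\omega$, $\cantor$, and other recursively presented Polish spaces by using their recursive dense sets of codes.

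For part (c), let $\sigma_0$ be the conjunction of a suitable finite fragment of $\mathrm{ZF}^-$, the axiom $V=L$ (i.e., ``every set is constructible''), ``$\omega$ exists'', and ``the ordinals have no largest element''. If $M_x$ is wellfounded, extensional, and satisfies $\sigma_0$, then $\tr(M_x)$ is a transitive model of $\sigma_0$; a standard argument shows any such model is exactly $L_\delta$ for some limit ordinal $\delta$, and countability of its universe forces $\delta<\omega_1$. For part (d), one takes $\varphi_0(v_0,v_1)$ to be G\"odel's formalization of $<_L$, obtained by first expressing the recursion $\alpha\mapsto L_\alpha$ in LOST via the definable-power-set operator, then declaring $v_0<_L v_1$ iff $v_0$ is constructed at an earlier stage, or at the same stage but from a LOST definition preceding that of $v_1$ in a fixed recursive enumeration of formulas. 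The main technical obstacle across these parts is the verification in (d) that $\varphi_0$ defines the intended wellorder \emph{absolutely} across all $L_\delta$ with $\delta>\omega$ limit; this absoluteness calculation is precisely the content of the standard treatment in \cite[\S 13]{kanamori97} and \cite[Ch.~5]{drake74}, which I would invoke.
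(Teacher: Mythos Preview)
Your proposal is correct and follows exactly the standard route the paper has in mind: the paper itself does not give a proof but simply refers to \cite[13.8]{kanamori97} and the remarks thereafter, which contain precisely the arguments you sketch (induction on formula complexity for (a), decoding via internal canonical representatives for (b), and the standard $\sigma_0$ and $\varphi_0$ for (c) and (d)). Your write-up is thus more detailed than the paper's own treatment, but entirely in line with it.
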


{\it Remark.} For $x\in\cantor$ and $n_0,n_1\in\omega$ it will be
convenient to write $n_0<_{\varphi_0}^x n_1$ as an abbreviation of
$M_x\models \varphi_0[n_0,n_1]$. By (a) in the previous proposition
$n_0<_{\varphi_0}^x n_1$ is arithmetical uniformly in $x$.

\medskip

As motivation for the proof of Theorem \ref{mainthmv1}, we first
prove the following easier result:

\begin{prop}
If $V=L$ then there is a $\Delta^1_2$ maximal set of orthogonal
measures.\label{easyprop}
\end{prop}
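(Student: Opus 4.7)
The plan is to use the canonical wellordering $<_L$ of $L$ to perform a greedy transfinite recursion that builds a maximal orthogonal family in $P(\cantor)$. Under $V=L$ every real is constructible, so $<_L$ wellorders $p(\cantor)$ in ordertype $\omega_1$; enumerate the elements as $\langle f_\alpha : \alpha<\omega_1\rangle$. Define $\mathcal{A}\subseteq p(\cantor)$ by transfinite recursion on $\alpha$: put $f_\alpha\in\mathcal{A}$ iff $\mu_{f_\alpha}\perp\mu_{f_\beta}$ for every $\beta<\alpha$ with $f_\beta\in\mathcal{A}$. By construction $\{\mu_f : f\in\mathcal{A}\}$ is a set of pairwise orthogonal measures, and it is automatically maximal: if some code $f=f_\alpha$ fails to be in $\mathcal{A}$, this is witnessed by a $\beta<\alpha$ with $f_\beta\in\mathcal{A}$ and $\mu_{f_\alpha}\not\perp\mu_{f_\beta}$, so $\mu_{f_\alpha}$ cannot be added to the family.

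The content of the proposition is the $\Delta^1_2$ complexity bound, which we obtain by quantifying over countable models of enough of $L$. Using Proposition \ref{basicprop}, say that $f\in\mathcal{A}$ iff there exists $x\in\cantor$ such that (i) $M_x$ is wellfounded and extensional; (ii) $M_x\models\sigma_0$, so $\tr(M_x)=L_\delta$ for some countable limit $\delta$; (iii) there is $m\in\omega$ with $\pi_x(m)=f$; and (iv) $M_x\models\psi[m]$, where $\psi$ is a LOST formula that expresses ``the element under consideration lies in the recursively-built $\mathcal{A}$'' using $\varphi_0$ for $<_L$ together with the arithmetical predicate $\perp$ on codes. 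Clause (i) is $\Pi^1_1$, clauses (ii)--(iv) are arithmetical in $x$ by Proposition \ref{basicprop}(a)--(b), so this defines $\mathcal{A}$ by a $\Sigma^1_2$ formula. Dually, the same relation is expressed in $\Pi^1_2$ form by demanding that \emph{every} such $x$ satisfies clause (iv).

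The $\Delta^1_2$ conclusion rests on the absoluteness observation that any two models $L_\delta,L_{\delta'}$ that both contain $f$ agree on whether $f\in\mathcal{A}$. This is because the inductive clause ``$\mu_f\perp\mu_g$ for every $g<_L f$ with $g\in\mathcal{A}$'' uses only the absolute wellordering $<_L$ (Proposition \ref{basicprop}(d)) and the arithmetical, hence absolute, relation $\perp$ (Lemma 2.4(a)); so the recursive predicate is computed identically in any sufficiently tall initial segment of $L$. Finally, $V=L$ guarantees that every $f\in p(\cantor)$ appears in some countable $L_\delta$, so the $\Sigma^1_2$ existential witness always exists precisely when $f\in\mathcal{A}$.

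The main obstacle is the formalization step: writing down the LOST formula $\psi$ that correctly captures, inside $M_x$, the transfinite recursion defining $\mathcal{A}$, and verifying that the resulting predicate is absolute between different $L_\delta$'s containing $f$. Once this is done carefully, the complexity computation is routine. This construction also serves as a warm-up for Theorem \ref{mainthmv2}, where the harder problem is to replace the $\Sigma^1_2$ existential quantifier ``there is a model $M_x$'' by a $\Pi^1_1$ description, which will require the Coding Lemma to identify the right $L_\delta$ from $f$ itself rather than guessing it.
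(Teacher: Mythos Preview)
Your proof is correct and follows the same strategy as the paper: build the maximal orthogonal family greedily along $<_L$ (your filter and the paper's ``take the $<_L$-least measure orthogonal to all previous ones'' select the same set) and certify $\Delta^1_2$ by quantifying over reals $x$ coding a countable $L_\delta$ containing the given code $f$. The one difference is packaging: you internalize the recursion as a single LOST formula $\psi$ evaluated inside $M_x$ and appeal to its absoluteness between levels $L_\delta$, whereas the paper carries an explicit enumerating sequence $s$ as an extra parameter in a relation $P(s,x)$ and verifies the recursive condition externally --- a formulation chosen deliberately because the pair $\langle s,x\rangle$ is exactly what gets coded into the measure in the proof of Theorem~\ref{mainthmv2}.
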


\begin{proof}
Work in $L$. The construction is done by induction on $\omega_1$. We
choose a sequence $\langle\mu_\beta:\beta<\omega_1\rangle$ such that
at each $\alpha<\omega_1$ it is the case that $\mu_\alpha$ is the
$<_L$-least measure such that $\mu_\alpha\perp\mu_\beta$ for all
$\beta<\alpha$. That $\mu_\alpha$ always exists follows from Lemma
\ref{ortholemma}. Then it is easy to see that
$$
A=\{\mu_\alpha:\alpha<\omega_1\}
$$
is a maximal orthogonal set of measures. To see that $A$ is
$\Delta^1_2$, define a relation $P\subseteq
p(\cantor)^{\leq\omega}\times \cantor$ by letting $P(s,x)$ if and
only if
\begin{enumerate}
\item $M_x$ is wellfounded and transitive, $M_x\models\sigma_0$, and
for some $m\in\omega$ we have $\pi_x(m)=s$.
\item $\{\mu_{s(n)}:n<\lh(s)\}$ is a set of orthogonal measures.
\item For all $n<\lh(s)$ it holds that $s(n)$ is the $<_L$-smallest
code for a measure orthogonal to all $s(k)$ for which $s(k)<_L
s(n)$.
\end{enumerate}
Condition (1) is clearly $\Pi^1_1$, and (2) is arithmetical.
Finally, if (1) and (2) hold then (3) may be expressed by saying
\begin{align*}
&(\forall n<\lh(s))(\forall f\in p(\cantor))(\forall n_1)[(\exists
n_0) n_0<_{\varphi_0}^x n_1\wedge
\pi_x(n_0)=f\wedge\pi_x(n_1)=s(n)]\\
&\longrightarrow [(\exists l)(\exists l')\neg s(l)\perp f\wedge
\pi_x(l')=s(l)\wedge l'<_{\varphi_0}^x n_1].
\end{align*}
Note that $P(s,x)$ holds if and only if $s$ is a sequence of codes
for the measures in some initial segment
$\{\mu_\alpha:\alpha<\beta\}$, and that the inductive construction
of this initial segment is witnessed in $L_\delta\simeq M_x$, for
some limit $\delta<\omega_1$. It then follows that
\begin{align*}
\mu\in A \iff& (\exists s)(\exists x) [P(s,x)\wedge (\exists n)
\mu_{s(n)}=\mu]\\
\iff& (\forall f)(\forall s)(\forall x)[(P(s,x)\wedge
\mu=\mu_f)\longrightarrow \\
&((\forall l) s(l)<_L f\vee (\exists l) s(l)=\mu)].
\end{align*}
Since the reference to $<_L$ can be replaced by $<_{\varphi_0}^x$,
this shows that $A$ is $\Delta^1_2$.
\end{proof}

To prove Theorem \ref{mainthmv2} we will use the technique developed
by A.W. Miller in \cite{miller89}. The idea is to replace $P$ in the
previous proof with a $\Pi^1_1$ relation $\hat P\subseteq
p_c(2^\omega)\times\cantor$ with the property that for all $f\in
p_c(\cantor)$ if
$$
(\exists x) \hat P(f,x)
$$
then $f$ ``codes'' some witness $x\in\cantor$ to this fact, more
precisely we will have
$$
(\exists x)\hat P(f,x)\iff (\exists x\in\Delta^1_1(f)) \hat P(f,x).
$$
Our maximal orthogonal set of measures will then be
$$
\hat A=\{\mu_f\in P_c(\cantor): (\exists x) \hat P(f,x)\}=\{\mu_f\in
P_c(\cantor): (\exists x\in\Delta^1_1(f)) \hat P(f,x)\},
$$
which will be $\Pi^1_1$ since $(\exists x\in\Delta^1_1(f))$ may be
replaced by a universal quantification, see e.g.
\cite[4.19]{manwei85}. What $f\in p_c(\cantor)$ specifically will
code is on the one hand the part of the inductive construction
witnessing that $f\in\hat A$, and on the other hand $x\in\cantor$
such that $M_x\simeq L_\delta$ for some limit $\delta<\omega_1$ in
which the inductive construction takes place. We need the following
facts for which B. Kasterman's thesis \cite{kastermans06} is an
excellent reference; see also \cite[\S 3]{kasstezha08}.

\begin{lemma}
(a) There are unboundedly many limit ordinals $\delta<\omega_1$ such
that there is $x\in L_{\delta+\omega}\cap\cantor$ such that
$M_x\simeq L_\delta$.

(b) If $M_x\simeq L_\delta$ for some limit $\delta<\omega_1$ then
there is $x'\in\Delta^1_1(x)$ such that $M_{x'}\simeq
L_{\delta+\omega}$.\label{kastlem}
\end{lemma}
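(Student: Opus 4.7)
For part (a), the plan is a L\"owenheim--Skolem plus condensation argument. Given any countable limit $\alpha<\omega_1$, form a countable elementary substructure $X\prec L_{\omega_1}$ with $\alpha\cup\{\alpha\}\subseteq X$; by G\"odel's condensation lemma, the Mostowski collapse of $X$ is $L_\delta$ for some countable limit $\delta>\alpha$, and $\alpha$ is fixed pointwise by the collapse. Since $L_{\omega_1}\models$ ``every set is countable'', so does $L_\delta$ by elementarity. A standard fine-structural argument then produces a bijection $e\colon\omega\to L_\delta$ definable over $L_\delta$, and a few steps of the $L$-hierarchy package this into a real $x\in L_{\delta+\omega}\cap\cantor$ satisfying $m\inx{x}n\iff e(m)\in e(n)$, so that $M_x\simeq L_\delta$. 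Running the construction for arbitrarily large $\alpha$ yields unboundedly many such $\delta$.

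For part (b), the plan is to reconstruct $L_{\delta+1},L_{\delta+2},\ldots$ inductively from $x$ and then concatenate. The key tool is that, since $M_x$ is wellfounded and extensional, the full satisfaction predicate
$$
\mathrm{Sat}(\ulcorner\varphi\urcorner,\bar n)\iff M_x\models\varphi[\bar n]
$$
(allowing the formula $\varphi$ to vary) is $\Delta^1_1(x)$, by standard facts about truth in wellfounded countable structures. Using $\mathrm{Sat}$ together with the $\Delta_1$-definability of the $L$-hierarchy, I inductively enumerate, for each $n$, the elements of $L_{\delta+n}$ by equivalence classes of pairs $(\varphi,\bar a)$, where $\bar a$ codes a tuple of previously enumerated elements of $L_{\delta+n-1}$ and $\varphi$ is a LOST formula defining a subset; equality and the $\in$-relation between such codes reduce to satisfaction in $L_{\delta+n-1}$, which in turn reduces uniformly to $\mathrm{Sat}$ on $M_x$. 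Concatenating the enumerations across all $n$ yields an enumeration $\omega\to L_{\delta+\omega}$ together with the induced $\in$-relation on $\omega$, both $\Delta^1_1(x)$, and packaging the $\in$-relation as a real gives $x'\in\Delta^1_1(x)\cap\cantor$ with $M_{x'}\simeq L_{\delta+\omega}$.

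The main obstacle in (b) is careful bookkeeping of definability: one must verify that the passage from stage $n$ to stage $n+1$ is uniform in $n$ and that the final $\in$-predicate on $\omega$ lies in $\Delta^1_1(x)$, not merely $\Sigma^1_1(x)$ or $\Pi^1_1(x)$. This hinges on combining the $\Delta_1$-definability of the $L$-hierarchy with the hyperarithmetic character of truth in a wellfounded countable structure. In (a) the delicate fine-structural step --- that ``every set is countable'' holding in $L_\delta$ gives a definable bijection $\omega\to L_\delta$ a few levels up --- is standard and is treated in detail in Kastermans' thesis \cite{kastermans06}.
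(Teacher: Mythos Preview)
Your proof sketches are correct and follow the standard route, but note that the paper itself does not supply a proof of this lemma: immediately after the statement it simply records that (a) follows from \cite[Lemma 3.6]{kasstezha08} and (b) from \cite[Lemma 3.5]{kasstezha08}. So there is nothing in the paper to compare against beyond those citations; what you have written is essentially a sketch of the arguments one finds in those references (and in Kastermans' thesis, which you already cite for the delicate fine-structural step in~(a)).

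Two minor remarks. In (a), you should make explicit why the collapsed ordinal $\delta$ is a \emph{limit}: since $X\prec L_{\omega_1}$ and $L_{\omega_1}$ satisfies ``every ordinal has a successor'', the ordinals of $X$ have no maximum, so $X\cap\omega_1$ is a limit ordinal. In (b), your claim that the full satisfaction predicate for $M_x$ is $\Delta^1_1(x)$ is correct and is really the heart of the matter; in fact it is recursive in $x^{(\omega)}$, which is more than enough, and the iteration over finitely many $L$-levels then stays uniformly within $\Delta^1_1(x)$ as you indicate. With these clarifications your write-up would serve as a reasonable expansion of what the paper leaves to the literature.
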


{\it Remark.} (a) follows from \cite[Lemma 3.6]{kasstezha08}, (b)
from \cite[Lemma 3.5]{kasstezha08}.

\medskip

\noindent{\it Coding a real into a measure.} We now describe a way
of coding a given real $z\in\cantor$ into a measure $\mu\in
P_c(2^\omega)$. Given $\mu\in P_c(2^\omega)$ and $s\in 2^{<\omega}$
we let $t(s,\mu)$ be the lexicographically least $t\in 2^{<\omega}$
such that $s\subseteq t$, $\mu(N_{s^\smallfrown 0})>0$ and
$\mu(N_{s^\smallfrown 1})>0$ if it exists, and otherwise we let
$t(s,\mu)=\emptyset$.

Define inductively $t^\mu_n\in 2^{<\omega}$ by letting
$t^\mu_0=\emptyset$ and
$$
t^\mu_{n+1}=t({t^\mu_n}^\smallfrown 0,\mu).
$$
Note that since $\mu$ is non-atomic we have that
$\lh(t_{n+1}^\mu)>\lh(t_n^\mu)$; we let
$t_\infty^{\mu}=\bigcup_{n=0}^\infty t_n^\mu$. For $f\in
p_c(\cantor)$ and $n\in\omega\cup\{\infty\}$ we will write $t^f_n$
for $t^{\mu_f}_n$. Clearly the sequence $(t_n^f:n\in\omega)$ is
recursive in $f$. Define  $R\subseteq p_c(\cantor)\times \cantor$ as
follows:
\begin{align*}
R(f,z) \iff &(\forall n\in\omega) [z(n)=1 \leftrightarrow (f({t_n^f}^\smallfrown 0)=\frac{2}{3}f(t^f_n) \wedge f({t_n^f}^\smallfrown 1)=\frac{1}{3}f(t_n^f)) \\
&\wedge z(n)=0 \leftrightarrow f({t_n^f}^\smallfrown 0)=\frac{1}{3}
f(t_n^f)\wedge f({t_n^f}^\smallfrown 1)=\frac{2}{3} f(t_n^f)]
\end{align*}

\begin{codinglemma}
Given $z\in 2^\omega$ and $f\in p_c(2^\omega)$ there is $g\in
p_c(2^\omega)$ such that $\mu_f\approx\mu_g$ and $R(g,z)$. Moreover,
$g$ may be found in a recursive way  given $f$ and $z$: There is a
recursive function $G:p_c(2^\omega)\times 2^\omega\to p_c(2^\omega)$
such that $\mu_{G(f,z)}\approx \mu_f$ and $R(G(f,z),z)$ for all
$f\in p_c(2^\omega)$, $z\in 2^\omega$.\label{thecodinglemma}
\end{codinglemma}

\begin{proof}
We define $G(f,z)$ inductively. Suppose $G(f,z)\rest 2^{< n}$ has
been defined. Then for $s\in 2^n$ we let
$$
G(f,z)(s^\smallfrown i)= \left\{ \begin{array}{ll}
         \frac{2}{3}G(f,z)(s)  & \text{if $s=t_k^{f}$ for some $k\in\omega$, and } \\
                               &      z(k)=1, i=0;\\
         \frac{1}{3} G(f,z)(s) & \text{if $s=t_k^{f}$ for some $k\in\omega$, and }\\
                        &              z(k)=1, i=1;\\
         \frac{1}{3} G(f,z)(s) & \text{if $s=t_k^{f}$ for some $k\in\omega$, and }\\
                        &              z(k)=0, i=0;\\
         \frac{2}{3} G(f,z)(s) & \text{if $s=t_k^{f}$ for some $k\in\omega$, and }\\
                        &             z(k)=0, i=0;\\
         0              &   \text{if } f(s)=0;\\
         \\
         \dfrac{G(f,z)(s)}{f(s)} f(s^\smallfrown i) & \mbox{otherwise}.\end{array} \right.
$$
Define for $s\in 2^{<\omega}$
$$
\theta(s)=\frac{G(f,z)(s)}{f(s)}
$$
whenever $f(s)\neq 0$, and let $\theta(s)=0$ otherwise. Note that if
$x\neq t_\infty^f$ and $s\in 2^{<\omega}$ is the longest sequence
such that $s\subseteq x$ and $s\subseteq t_\infty^f$ then
$\theta(x\restrict n)$ is constant for $n>\lh(s)$. Let
$(s_i:i\in\omega)$ enumerate the set
$$
\{ s \in 2^{<\omega}: s\nsubseteq t_\infty^f\wedge s\restrict\lh(s)-1\subseteq t_\infty^f\}.
$$
Then for any Borel set $B$,
$$
\mu_{G(f,z)}(B)=\sum_{i=0}^\infty \theta(s_i) \mu_f(B\cap N_{s_i}),
$$
and since $\theta(s_i)=0$ if and only if $f(s_i)=0$ this shows that
$\mu_{G(f,z)}(B)=0$ if and only if $\mu_f(B)=0$. Thus
$\mu_g\approx\mu_f$, as required. In fact, if we define
$$
\hat\theta(x)=\lim_{n\to\infty} \theta(x\restrict n)
$$
if $x\neq t_\infty^f$ and $\hat\theta(t_\infty)=0$ then
$$
\frac{d\mu_g}{d\mu_f}=\hat\theta.
$$
Finally, it is clear from the definition of $G$ that $R(G(f,z),z)$,
and that $G$ is recursive.
\end{proof}

\begin{remark}
The relation $R(f,z)$ may be read as ``$f$ codes $z$''. The set
$$
\dom(R)=\{f\in p_c(\cantor): (\exists z) R(f,z)\}
$$
is $\Pi^0_1$ since deciding whether a given $f\in p_c(2^\omega)$
codes {\it some} $z\in\cantor$ only requires us to check for all $n$
that either $f({t^f_n}^\smallfrown i)=\frac{1}{3}f(t^f_n)$ or
$f({t^f_n}^\smallfrown i)=\frac{2}{3}f(t^f_n)$ holds, for $i=0$ and
$i=1$. Thus we may define a $\Pi^0_1$ function $r$ on $\dom(R)$ by
$$
r(\mu)=z\iff R(\mu,z).
$$
\end{remark}

\begin{proof}[Proof of Theorem \ref{mainthmv2}]
Work in $L$. We first define a maximal set of orthogonal measures by
induction on $\omega_1$, and then subsequently see that this set is
$\Pi^1_1$.

Let $\langle\mu_\alpha:\alpha<\omega_1\rangle$ be the sequence
defined in \ref{easyprop}. We will define a new sequence of measures
$\langle\nu_\alpha<\alpha<\omega_1\rangle$ such that
$\mu_\alpha\approx\nu_\alpha$, but where the resulting maximal
orthogonal set
$$
\hat A=\{\nu_\alpha:\alpha<\omega_1\}
$$
is $\Pi^1_1$. Suppose $\langle\nu_\alpha\in
P_c(\cantor):\alpha<\beta\rangle$ has been defined for some
$\beta<\omega_1$. Let $s_0\in p_c(\cantor)^{\leq\omega}$ be $<_L$
least such that
$$
\{\mu_\alpha:\alpha\leq\beta\}=\{\mu_{s_0(n)}:n\in\lh(s_0)\},
$$
and $s_0(0)$ is the $<_L$ largest element of
$\{s_0(n):n\in\lh(s_0)\}$. Let $x_0\in\cantor$ be $<_L$ least such
that $M_{x_0}\simeq L_\delta$ for some limit $\delta<\omega_1$ and
$s_0\in L_\delta$, and $x_0\in L_{\delta+\omega}$. That $x_0$ exists
follows from Lemma \ref{kastlem}. We let
$\nu_\beta=\mu_{G(s_0(0),\langle s_0,x_0\rangle)}$, where
$\langle\cdot,\cdot\rangle$ denotes some (fixed) reasonable
recursive way of coding a pair $(s,x)\in
p_c(\cantor)^{\leq\omega}\times \cantor$ as a single element of
$\cantor$. Note that $G(s_0(0),\langle s_0,x_0\rangle)\in
L_{\delta+\omega}$, since $G$ is recursive.

It is clear that
$$
\hat A=\{\nu_\alpha:\alpha<\omega_1\}
$$
is a maximal orthogonal set of measures. Thus it remains only to see
that $\hat A$ is $\Pi^1_1$.

We first define a relation $Q\subseteq
p_c(\cantor)^{\leq\omega}\times\cantor$, similar to $P$ in
Proposition \ref{easyprop}. We let $Q(s,x)$ if and only if
\begin{enumerate}[\hspace{1.6em}(a)]
\item $M_x$ is wellfounded and transitive, $M_x\models\sigma_0$, and
for some $m\in\omega$ we have $\pi_x(m)=s$.
\item $\{\mu_{s(n)}:n<\lh(s)\}$ is a set of orthogonal continuous measures.
\item For all $n<\lh(s)$ it holds that $s(n)$ is the $<_L$-smallest
code for a continuous measure orthogonal to all $s(k)$ for which
$s(k)<_L s(n)$.
\item $(\forall k>0) s(k)<_L s(0)$.
\end{enumerate}
That the relation $Q$ is $\Pi^1_1$ follows as in the proof of
Proposition \ref{easyprop}.

Now define a relation $\hat P\subseteq p_c(\cantor)\times\cantor$ by
letting $\hat P(f,x)$ if and only if
\begin{enumerate}
\item $M_x$ is wellfounded and transitive, $M_x\models\sigma_0$, and
for some $m\in\omega$ we have $\pi_x(m)=f$.
\item $f\in\dom(R)$, $r(f)=\langle s,w\rangle$ for some $(s,w)\in
p_c(\cantor)^{\leq\omega}\times\cantor$, and $Q(s,w)$.
\item $(\forall s'\in p_c(\cantor)^{\leq\omega})[\{s(n):n\in\lh(s)\}=\{s'(n):n\in\lh(s')\}\wedge s(0)=s'(0)]\longrightarrow \neg (s'<_L s)$
\item If $M_w\simeq L_{\delta'}$ then $w\in L_{\delta'+\omega}$,
and $w$ is $<_L$ least such that this holds and for some
$m\in\omega$, $\pi_w(m)=s$.
\item $f=G(s(0),\langle s,w\rangle)$.
\end{enumerate}
Conditions (1) and (2) are $\Pi^1_1$ and (5) is $\Pi^0_1$. If (1)
and (2) hold then (3) is equivalent to
\begin{align*}
(\forall s'\in p_c(\cantor)^{\leq\omega})((\forall l)(\exists l')
s(l)=s'(l')\wedge (\forall l)(\exists l') s'(l)=s(l')\wedge
s(0)=s'(0))\\
\longrightarrow ((\forall n_0)(\forall n_1) (\pi_x(n_0)=s\wedge
\pi_x(n_1)=s'\longrightarrow \neg n_1<^x_{\varphi_0} n_0)).
\end{align*}
which is a $\Pi^1_1$ predicate.

To verify that (4) is a $\Pi^1_1$ condition, define as in \cite[p.
170]{kanamori97} the restriction $M_x\restrict k$, for $x\in\cantor$
and $k\in\omega$, to be the $\mathcal L_{\epsilon}$ structure
$$
M_x\restrict k=(\{n: n\inx x k\},\epsilon_x).
$$
Assuming that (1)--(3) hold (4) is equivalent to the conjunction of
the following two conditions:
\begin{align*}
(\forall k)&((M_x\restrict k\models\sigma_0\wedge (\exists l) l\inx
x
k\wedge M_x\restrict l\simeq M_w)\longrightarrow\\
&((\exists n_0, n_1) n_0\inx x k\wedge n_1\inx x k\wedge
\pi_x(n_0)=s\wedge \pi_x(n_1)=w))
\end{align*}
and
\begin{align*}
(\forall w')(\forall k)&((M_{w'}\simeq M_x\restrict k\wedge
M_{w'}\models \sigma_0\wedge w'<_L w)\longrightarrow\\
&(\forall n_0, n_1)((n_0\inx x k\wedge n_1\inx x k)\longrightarrow
\pi_x(n_0)\neq s\wedge \pi_x(n_1)\neq w')),
\end{align*}
where $\simeq$ denotes isomorphism between $\mathcal L_\epsilon$
structures. Since $\simeq$, which is $\Sigma^1_1$, only occurs on
the left-hand side of the above implications, and since $<_L$ may be
replaced by using $<_{\varphi_0}^x$ as before, this shows that (4)
may be replaced by a $\Pi^1_1$ predicate, which proves that $\hat P$
is a $\Pi^1_1$ relation.

It is clear from the definition of $\hat P$ that
$$
\hat A=\{\mu_f\in P_c(\cantor): (\exists x)\hat P(f,x)\}.
$$
To see that $\hat A$ is $\Pi^1_1$, suppose that $\mu_f\in \hat A$.
Then $\hat P(f,x)$ for some $x$, and so $f\in\dom(R)$ and
$r(f)=\langle s,w\rangle$ where $M_w\simeq L_{\delta'}$. By Lemma
\ref{kastlem} there is $w'\in\Delta^1_1(w)$ such that $M_{w'}\simeq
L_{\delta'+\omega}$ and by condition (4) above $w\in
L_{\delta'+\omega}$. But then $\hat P(f,w')$ holds. Thus we have
shown that
$$
(\exists x) \hat P(f,x)\iff (\exists x\in\Delta^1_1(f))\hat P(f,x),
$$
which proves that $\hat A$ is $\Pi^1_1$.
\end{proof}

\section{Final remarks}

In this final section we consider two natural questions: What is the
cardinality of a maximal orthogonal family of measures? And is it
consistent that there is no co-analytic maximal orthogonal family of
measures?

\medskip

Both questions can be answered using the product measure
construction of Kechris and Sofronidis described in \S 2. Any
maximal orthogonal family in $P(\cantor)$ must have size $\mathfrak
c$ since there are $\mathfrak c$ many point measures. But even if we
only consider non-atomic measures we reach the same conclusion:

\begin{prop}
Let $A\subseteq P_c(2^\omega)$, $|A|<\mathfrak{c}$, be a set of
pairwise orthogonal measures. Then $A$ is not maximal orthogonal in
$P_c(\cantor)$. In fact, there is a product measure which is
orthogonal to all elements of $A$.
\end{prop}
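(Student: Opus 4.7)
The plan is to produce $x\in 2^\omega$ such that the Kechris--Sofronidis product measure $\mu^x$ is orthogonal to every $\mu\in A$; any such $\mu^x$ automatically lies in $P_c(2^\omega)$ since its coordinate weights $\alpha^x(n),1-\alpha^x(n)$ lie in $[1/4,3/4]$, yielding $\mu^x(\{y\})\le(3/4)^\omega=0$ for every $y\in 2^\omega$. Adjoining such a $\mu^x$ to $A$ then refutes maximality.

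For each $\mu\in A$ let $B_\mu=\{[x]_{E_I}:\mu^x\not\perp\mu\}$. First I would show $B_\mu$ is countable. Otherwise there would be pairwise $E_I$-inequivalent $x_\alpha$, $\alpha<\omega_1$, with $\mu^{x_\alpha}\not\perp\mu$; using the equivalence $\mu\not\perp\nu\iff(\exists\eta)(\eta\ll\mu\wedge\eta\ll\nu)$ recalled just before Lemma \ref{ortholemma}, pick $\nu_\alpha\in P(\cantor)$ with $\nu_\alpha\ll\mu^{x_\alpha}$ and $\nu_\alpha\ll\mu$. Kakutani's theorem forces $\mu^{x_\alpha}\perp\mu^{x_\beta}$ for $\alpha\ne\beta$, so the $\nu_\alpha$ are pairwise orthogonal and all $\ll\mu$, contradicting the ccc property of $\ll$ below $\mu$ recorded in the preamble to Lemma \ref{ortholemma}.

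Next I would verify that $|2^\omega/E_I|=\mathfrak c$. Partition $\omega$ into consecutive finite intervals $I_k$ chosen so that $\sum_{n\in I_k}1/(n+1)\ge 1$ and define $\Phi:2^\omega\to 2^\omega$ by $\Phi(y)(n)=y(k)$ for $n\in I_k$. For distinct $y,y'\in 2^\omega$, setting $D=\{k:y(k)\ne y'(k)\}$, one has
\[
\sum_{n}\frac{|\Phi(y)(n)-\Phi(y')(n)|}{n+1}=\sum_{k\in D}\sum_{n\in I_k}\frac{1}{n+1}\ge |D|,
\]
so whenever $D$ is infinite, $\Phi(y)\not E_I\Phi(y')$. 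Applying $\Phi$ to a transversal of the ``eventual equality'' relation on $2^\omega$ (which has $\mathfrak c$ countable classes) produces $\mathfrak c$ pairwise $E_I$-inequivalent points, giving the required lower bound. Combining, $\bigcup_{\mu\in A}B_\mu$ has size at most $|A|\cdot\aleph_0<\mathfrak c$, so some $E_I$-class lies outside every $B_\mu$, and any $x$ in it satisfies $\mu^x\perp\mu$ for all $\mu\in A$. The delicate point is precisely the lower bound $|2^\omega/E_I|\ge\mathfrak c$: the comeagreness delivered by Lemma \ref{ortholemma}(a) alone is insufficient once $|A|$ exceeds $\aleph_0$, and the explicit encoding above is what carries the cardinality count through without invoking general dichotomy theorems.
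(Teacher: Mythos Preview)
Your argument is correct and follows the same overall strategy as the paper: both show that each $\mu\in A$ is non-orthogonal to only countably many $E_I$-classes of product measures (via the ccc-below property of $\ll$), and then run a cardinality count against the $\mathfrak c$-many $E_I$-classes to find a product measure orthogonal to all of $A$. The only point of divergence is how one establishes $|2^\omega/E_I|=\mathfrak c$: the paper observes that $E_I$ has meagre classes and invokes Mycielski's Theorem to obtain a perfect set of pairwise $E_I$-inequivalent points, whereas you build such a family by hand via the interval-stretching map $\Phi$ and a reduction to $E_0$. Your route is more elementary and self-contained (no black-box dichotomy theorem), while the paper's is a one-line citation; the remainder of the counting is identical in both.
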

\begin{proof} Suppose
$A=\{\nu_\alpha: \alpha<\mathfrak{\kappa}\}$, where
$\kappa<\mathfrak{c}$, is an orthogonal family. Since $E_I$ (as
defined in \S 2) has meagre classes, it follows by Mycielski's
Theorem (see e.g. \cite{gao08}) that there are perfectly many $E_I$
classes, and so we can find a sequence $(\mu_\alpha
:\alpha<\mathfrak{c})$ of orthogonal product measures. For each
$\nu_\alpha$ there can be at most countably many $\beta<\mathfrak c$
such that
$$
\mu_{\beta}\not\perp\nu_\alpha
$$
since $\ll$ is ccc below $\nu_\alpha$ (see e.g. the proof of
\cite[Theorem 3.1]{kecsof01}). Since $\kappa<\mathfrak c$ it follows
that there must be some $\beta<\mathfrak c$ such that $\mu_\beta$ is
orthogonal to all elements of $A$.
\end{proof}

\begin{prop}
If there is a Cohen real over $L$ then there is no $\Pi^1_1$ maximal
orthogonal set of measures.\label{cohen}
\end{prop}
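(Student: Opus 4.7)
The plan is to argue by contradiction. Suppose $A \subseteq P(\cantor)$ is a $\Pi^1_1$ maximal orthogonal family and $c \in \cantor$ is Cohen-generic over $L$. The intersection $A\cap P_c(\cantor)$ is $\Pi^1_1$ and is itself maximal orthogonal in $P_c(\cantor)$ (a continuous measure is orthogonal to any atomic witness in $A$), so without loss of generality I may assume $A \subseteq P_c(\cantor)$.

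The first step is to analyse the set
$$
T = \{\, x \in \cantor : \mu^x \in A \,\},
$$
which is $\Pi^1_1$ by the recursive presentation of $x\mapsto \mu^x$. Because that map is injective and $x E_I y$ implies $\mu^x\approx \mu^y$ (and hence $\mu^x\not\perp \mu^y$), orthogonality of $A$ forces $T$ to contain at most one element of each $E_I$-class; that is, $T$ is a partial $E_I$-transversal. I would show $T$ is meager: $T$ has the Baire property, so if it were non-meager it would be comeager in some basic open $N_s$. For any coordinate $n > \lh(s)$, the bit-flip $\sigma_n \colon \cantor\to\cantor$ at position $n$ is a homeomorphism fixing $N_s$ setwise, so $\sigma_n(T)\cap N_s$ is also comeager in $N_s$. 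Their intersection is then non-empty, producing two distinct, $E_0$-equivalent (hence $E_I$-equivalent) elements of $T$, contradicting the transversal property.

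Next I would transfer the meagerness of $T$ to $L$: ``$T$ is meager'' is $\Sigma^1_2$ in the $\Pi^1_1$-code of $T$, hence absolute between $L$ and $V$ by Shoenfield; the same applies to the $\Pi^1_1$ statement ``$T\subseteq F$'' for a Borel $F_\sigma$ set $F$. Inside $L$ I choose a meager $F_\sigma$ set $F\supseteq T$ with code in $L$; by absoluteness this inclusion persists in $V$. Since $c$ is Cohen over $L$ and $F$ is meager Borel with code in $L$, $c\notin F$ and hence $\mu^c\notin A$. Maximality of $A$ then produces $\mu \in A$ with $\mu\not\perp \mu^c$. But Lemma~\ref{ortholemma}(a) tells us that for each $\mu' \in P_c(\cantor)\cap L$, the set $\{x:\mu^x \perp \mu'\}$ is a comeager Borel set with code in $L$, and Cohen-genericity of $c$ places $c$ in every such set; hence $\mu^c \perp \mu'$ for every $\mu' \in L$, and combining with $\mu\not\perp\mu^c$ gives $\mu\notin L$.

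The final step — turning the existence of $\mu\in A\setminus L$ with $\mu\not\perp\mu^c$ into an outright contradiction — is the main obstacle. My plan is to exploit Mansfield's theorem: since $A$ is $\Pi^1_1$ and now has an element outside $L$, it must contain a perfect subset $P\subseteq A$. Combining this with Preiss--Rataj applied to the Borel orthogonal family $P$ and with an iteration of the $E_I$-transversal/Cohen-genericity argument of the previous paragraph, one extracts from the $\Pi^1_1$-code of $A$ and the perfect subset an $L$-coded Borel selection $F \colon \cantor \to P(\cantor)$ such that $F(c)\in A$ witnesses $F(c)\not\perp\mu^c$; a Kuratowski--Ulam style analysis of the diagonal $\{x : F(x)\not\perp \mu^x\}$, together with the specific behaviour of product measures $\mu^x$ under Cohen genericity (Kakutani's criterion), should then force the diagonal to be meager with code in $L$, so that Cohen-genericity of $c$ gives $F(c)\perp \mu^c$ — the desired contradiction. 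Carrying out this last diagonal argument, while keeping control of which Borel codes remain accessible to Cohen-genericity, is the delicate core of the proof and parallels the techniques used in the corresponding $\Pi^1_1$-destruction theorems for maximal almost disjoint families.
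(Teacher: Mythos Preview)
Your argument has a genuine gap at the final step, which you yourself flag as ``the delicate core.'' Everything through showing $\mu^c\notin A$ and producing $\mu\in A\setminus L$ with $\mu\not\perp\mu^c$ is fine, but from there the plan does not go through. The Mansfield perfect subset does not yield an $L$-coded Borel selection $F:\cantor\to P(\cantor)$ with $F(x)\in A$ and $F(x)\not\perp\mu^x$; choosing a witness to non-orthogonality is a uniformization problem, and a perfect set of pairwise orthogonal measures inside $A$ tells you nothing about which element of $A$ is non-orthogonal to a given $\mu^x$. Preiss--Rataj does not help either: it asserts non-maximality of analytic orthogonal families, but your perfect $P\subseteq A$ is not claimed to be maximal, so the theorem has no leverage. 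And even granting such an $F$, the diagonal $\{x:F(x)\not\perp\mu^x\}$ has no reason to be meager---you are, after all, constructing $F$ precisely so that $c$ lies in it, so Cohen-genericity cannot then be invoked to eject $c$ from it.

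The paper takes a quite different route and avoids this impasse entirely. Rather than manipulate the Cohen real directly, it invokes the Judah--Shelah theorem: a Cohen real over $L$ forces every $\Delta^1_2$ set of reals to have the Baire property. One then applies $\Pi^1_1$ uniformization to the relation ``$(\nu_n)$ lists elements of $A$ that together absorb everything non-orthogonal to $\mu^x$,'' obtaining a function $f:\cantor\to P(\cantor)^\omega$ with $\Pi^1_1$ graph; preimages of basic open sets under $f$ are $\Delta^1_2$, hence Baire measurable. This Baire-measurable, $E_I$-invariant assignment of countable sets is exactly the input to the Kechris--Sofronidis turbulence argument, which produces the contradiction. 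The key idea you are missing is this bridge from ``Cohen over $L$'' to ``$\Delta^1_2$ sets have the Baire property,'' which reduces the problem to one already handled by turbulence.

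A minor side issue: your reduction to $A\subseteq P_c(\cantor)$ is not correct as stated. Members of a maximal orthogonal $A$ may have both atomic and continuous parts, so a continuous $\nu$ need not be orthogonal to every non-continuous element of $A$, and $A\cap P_c(\cantor)$ need not be maximal in $P_c(\cantor)$. The paper's argument simply works in $P(\cantor)$ throughout and does not need this reduction.
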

\begin{proof}
We will use the following result of Judah and Shelah
\cite{judshe89}: If there is a Cohen real over $L$ then every
$\Delta^1_2$ set of reals is Baire measurable.

Suppose $A\subseteq P(\cantor)$ is a $\Pi^1_1$ maximal orthogonal
set of measures. Then define a relation $Q \subseteq \cantor\times
P(\cantor)^\omega$ by
$$
Q(x,(\nu_n))\iff (\forall n) (\nu_n\in
A\wedge\nu_n\not\perp\mu^x)\wedge
(\forall\mu)(\mu\not\perp\mu^x\longrightarrow (\exists n)
\nu_n\not\perp\mu)
$$
Then for each $x\in\cantor$ the section $Q_x$ is non-empty since $A$
is maximal. Using $\Pi^1_1$ uniformization, we obtain a function
$f:\cantor\to P(\cantor)^\omega$ having a $\Pi^1_1$ graph and such
that
$$
(\forall x) Q(x,f(x)).
$$
Now if $U\subseteq P(\cantor)^\omega$ is a basic open set then
\begin{align*}
x\in f^{-1}(U)&\iff (\exists (\nu_n)\in P(\cantor)^\omega)
f(x)=(\nu_n)\wedge (\nu_n)\in
U\\
&\iff (\forall (\nu_n)\in P(\cantor)^\omega) f(x)\neq(\nu_n)\vee
(\nu_n)\in U.
\end{align*}
Thus $f^{-1}(U)$ is $\Delta^1_2$, and so if there is a Cohen real
over $L$ then it has the property of Baire. It follows that $f$ is a
function with the Baire property. But then we may argue just as in
Kechris and Sofronidis' proof that no analytic set of measure is
maximal orthogonal and arrive at a contradiction: Indeed, $f$ is an
$E_I$-invariant assignment of countable subsets of $P(\cantor)$, and
$E_I$ is a turbulent equivalence relation, and so we must have that
$$
x\mapsto A(x)=\{f(x)(n):n\in\omega\}
$$
is constant on a comeagre set. This contradicts that
$$
(\forall x,x'\in\cantor) \neg (x E_I x')\implies \mu^x\perp\mu^{x'}
$$
and the ccc-below property of $\ll$.
\end{proof}

The natural relativization of Proposition \ref{cohen} gives us that
if for every $x\in\cantor$ there is a Cohen real over $L[x]$ then
there is no co-analytic (i.e., boldface $\mathbf\Pi^1_1$) maximal
set of orthogonal measures. We do not know what happens with the
complexity of maximal orthogonal sets if we add other types of
reals. In fact, we do not know the answer to the following:

\begin{question}
If there is a $\Pi^1_1$ maximal orthogonal set of measures, are all
reals constructible?
\end{question}

\bibliographystyle{amsplain}
\bibliography{pi11measures}

\end{document}